\documentclass[11pt,reqno]{amsart}

\usepackage[T1]{fontenc}
\usepackage[utf8]{inputenc}
\usepackage{lmodern}
\usepackage{microtype}
\usepackage{mathtools}
\usepackage{amssymb}
\usepackage{xcolor}
\usepackage[a4paper,margin=1.12in]{geometry}
\usepackage[
  colorlinks=true,
  linkcolor=blue!50!black,
  citecolor=blue!50!black,
  urlcolor=blue!50!black
]{hyperref}

\newtheorem{theorem}{Theorem}[section]
\newtheorem{proposition}[theorem]{Proposition}
\newtheorem{lemma}[theorem]{Lemma}
\newtheorem{corollary}[theorem]{Corollary}
\theoremstyle{remark}

\newtheorem{question}[theorem]{Question}

\title{Bang vectors in the real polarization problem:\\ the exact dimensional range}

\author{Dami\'an Pinasco}
\address{Universidad Torcuato Di Tella, Departamento de Matem\'atica y Estad\'istica, and CONICET, Buenos Aires, Argentina}
\email{dpinasco@utdt.edu}

\subjclass[2020]{Primary 46G25; Secondary 52A40}
\keywords{Linear polarization constants, products of linear functionals, signed sums, Bang vectors, regular simplices}

\begin{document}

\begin{abstract}
We determine the exact dimensional range in which a normalized longest signed sum, which we call a Bang vector, always satisfies the real polarization inequality: this prescription is universally valid if and only if \(n\leq 14\). For every \(n\geq 15\) we construct \(n\) unit vectors whose all-positive sum is, up to global sign, the unique longest signed sum, while its normalized direction has polarization product smaller than \(n^{-n/2}\). The counterexamples are given by a single explicit family, and for the same configurations we exhibit nonmaximal signed sums whose normalized directions do satisfy the polarization bound.
\end{abstract}

\maketitle

\section{Introduction}

The real polarization conjecture, formulated by Ben\'itez, Sarantopoulos and Tonge in the late 1990s~\cite{BenitezSarantopoulosTonge1998}, remained open for almost three decades. During that period, several exact low-dimensional results were obtained by locating explicit points on the sphere. The present note revisits one of those concrete methods and determines its exact dimensional range.

Let $v_1,\dots,v_n$ be unit vectors in $\mathbb R^n$ and set
\[
P(x)=\prod_{i=1}^n |\langle x,v_i\rangle|,
\qquad x\in S^{n-1}.
\]
The real polarization conjecture asserts that
\begin{equation}\label{eq:polarization}
\max_{x\in S^{n-1}}P(x)\geq n^{-n/2}.
\end{equation}
Equivalently, the $n$-th linear polarization constant of $\mathbb R^n$ is $n^{n/2}$. Mart\'inez and Ortega-Moreno recently proved the stronger conjecture of Ball and Frenkel and thereby established \eqref{eq:polarization} in every dimension~\cite{MartinezOrtega2026}.

The general existence problem is therefore settled. A separate question concerns the particular constructions that were used to produce suitable points. One of the exact methods in low dimensions was to test the product at a normalized signed sum of maximal Euclidean norm. The connection between plank problems and polarization was developed by R\'ev\'esz and Sarantopoulos~\cite{ReveszSarantopoulos2004}. Motivated more specifically by Bang's proof of Tarski's plank theorem~\cite{Bang1951}, choose signs $\varepsilon_i\in\{-1,1\}$ so that
\[
s_{\varepsilon}=\sum_{i=1}^n\varepsilon_i v_i
\]
has maximal Euclidean norm, and test \eqref{eq:polarization} at $s_{\varepsilon}/\|s_{\varepsilon}\|$. We refer to any such normalized sum as a \emph{Bang vector} of the system.

Pappas and R\'ev\'esz proved that the normalized longest signed sum satisfies the polarization inequality for $n\leq5$. They then asked whether, in every dimension, some choice of signs always produces a normalized signed sum satisfying the inequality~\cite{PappasRevesz2004}. Matolcsi and Mu\~noz later showed that the particular longest-sum prescription is not universal: they constructed a counterexample in dimension $34$, and the same mechanism yields examples in every higher dimension~\cite{MatolcsiMunoz2006}. Their result does not settle the signed-sum selection question; in their example, a different nonmaximal sign choice still satisfies the inequality. The positive range for the longest-sum prescription was subsequently extended to $n\leq14$~\cite{Pinasco2023}. Thus, before the present work, it remained unknown whether this prescription was universally valid in dimensions $15\leq n\leq33$.

The proof of the $n\leq14$ result is governed by a scalar inequality whose direction reverses at $n=15$. A closer examination of that inequality suggests the construction developed below. The resulting family shows that the reversal reflects a genuine geometric obstruction rather than an artifact of the earlier argument. Unlike the earlier counterexample, which combines a concentrated block with additional orthogonal directions, the present examples are built directly in each dimension. For every $n\geq15$, the components orthogonal to a distinguished axis form a regular simplex in an $(n-2)$-dimensional subspace. Their sum vanishes, so the all-positive signed sum points along the axis, while the symmetry of the simplex reduces the comparison with every other signed sum to a scalar condition.

A unit vector $e$ plays the role of the axis, and $n-1$ further vectors lie at the same angle from $e$. The resulting picture is that of a simplicial umbrella: $e$ is its handle and the remaining vectors are symmetrically arranged ribs. The parameter controlling their angle with $e$ balances two opposite demands. Moving the ribs away from the handle lowers the product in the handle direction, but moving them too far destroys the maximality of the corresponding signed sum. These two requirements are incompatible in dimension $14$ and become compatible from dimension $15$ onward.

\begin{theorem}\label{thm:main}
The Bang-vector prescription is universally valid for systems of $n$ unit vectors in $\mathbb R^n$ if and only if $n\leq14$.

More precisely, for every integer $n\geq15$ there exist unit vectors $v_1,\dots,v_n\in\mathbb R^n$ such that, up to a simultaneous change of all signs,
\[
s=\sum_{i=1}^n v_i
\]
is the unique longest signed sum and
\[
\prod_{i=1}^n
\left|
\left\langle \frac{s}{\|s\|},v_i\right\rangle
\right|
<n^{-n/2}.
\]
\end{theorem}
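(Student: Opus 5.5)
The ``only if'' direction for $n\le14$ is the positive result of~\cite{Pinasco2023}. So the plan is to prove the construction for $n\geq15$ with the umbrella configuration. Fix an orthonormal axis vector $e$ and unit vectors $u_2,\dots,u_n$ forming a regular simplex in an $(n-2)$-dimensional subspace of $e^\perp$. Thus $\sum_i u_i=0$ and $\langle u_i,u_j\rangle=-1/(n-2)$ for $i\neq j$. For a parameter $a\in(0,1)$ and $b=\sqrt{1-a^2}$, set
\[
v_1=e,\qquad v_i=a e+b u_i \quad (2\le i\le n).
\]
These are $n$ unit vectors in $\mathbb R^n$. Since the $u_i$ sum to zero, $s=\sum_i v_i=(1+(n-1)a)e$, so $s/\|s\|=e$ and the polarization product at $e$ equals $a^{n-1}$. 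The target inequality is therefore simply $a^{n-1}<n^{-n/2}$.

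Next I will compute all signed sums. Up to global sign, take $\varepsilon_1=+1$, and let $k$ ribs carry sign $+$ and $n-1-k$ carry sign $-$. Because $\sum u_i=0$, the orthogonal part is $2b\sum_{i\in K}u_i$. The Gram relations give $\|\sum_{i\in K}u_i\|^2=k(n-1-k)/(n-2)$. Hence the squared norm depends only on $k$:
\[
f(k)=\bigl(1+(2k-n+1)a\bigr)^2+\frac{4(1-a^2)\,k(n-1-k)}{n-2},\qquad 0\le k\le n-1 .
\]
The all-positive sum corresponds to $k=n-1$ and is the unique longest signed sum up to global sign iff $f(k)<f(n-1)$ for all $0\le k\le n-2$. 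The $k=n-1$ class contains only one sign pattern, so uniqueness is automatic once the inequality is strict.

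The key reduction is that $f$ is a quadratic in $k$ with leading coefficient $4a^2-4(1-a^2)/(n-2)$. In the regime of interest $a<1/\sqrt{n-1}$, which is forced by $a^{n-1}<n^{-n/2}$, $f$ is concave. Its increments $f(k+1)-f(k)$ are then decreasing, so it suffices that the last increment is positive: $f(n-1)>f(n-2)$. If $f$ happens to be convex, the endpoint comparison $f(0)=(1-(n-1)a)^2<f(n-1)$ suffices instead. A direct computation gives
\[
f(n-1)-f(n-2)=4\bigl((n-1)a^2+a-1\bigr),
\]
so maximality holds exactly when $a>a_*:=\frac{\sqrt{4n-3}-1}{2(n-1)}$. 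Both requirements are compatible iff $a_*^{\,n-1}<n^{-n/2}$, and then any $a$ in $\bigl(a_*,\,n^{-n/(2(n-1))}\bigr)$ works.

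The main obstacle is the scalar inequality $(n-1)\log a_*+\tfrac n2\log n<0$ for every $n\ge15$ (it should fail at $n=14$, matching the sharpness). I would write $a_*=\frac{2}{\sqrt{4n-3}+1}$ and compare with $n^{-1/2}$. Asymptotically $a_*\approx (n-1)^{-1/2}-\tfrac{1}{2(n-1)}$, so the left side behaves like $-\tfrac12\sqrt{n-1}+\tfrac12\log n+O(1)$, which is negative for large $n$. To make this rigorous, I would show that $g(n)=(n-1)\log a_*+\tfrac n2\log n$, viewed as a function of real $n$, is decreasing for $n\ge15$, using elementary bounds on $\log(1+x)$. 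I would then verify $g(15)<0$ numerically with explicit rational bounds. If monotonicity is messy, the fallback is a check for $15\le n\le N_0$ together with a crude analytic bound valid for $n>N_0$.
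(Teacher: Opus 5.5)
Your proof is correct and uses the same simplex-umbrella configuration as the paper. It finishes by a genuinely different route in the final scalar step, and derives the maximality condition slightly differently.

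\emph{Maximality.} The paper uses the cut identity $\|s_A\|^2=\|s\|^2-4\sum_{i\in A,\,j\notin A}\langle v_i,v_j\rangle$ and imposes the uniform worst case $(n-1)a^2+a-1>0$. Your exact profile $f(k)$, together with its concavity in $k$, gives the same condition. Concavity is automatic here, since $a<n^{-n/(2(n-1))}<(n-1)^{-1/2}$. Your version also shows that the condition is necessary, not just sufficient.

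\emph{Final scalar step.} The paper never tests the endpoint $a_*=\alpha_n$. It fixes $a=1/(\sqrt n+2/5)$, so maximality becomes the identity $(n-1)a^2+a-1=(5\sqrt n-19)/(25(\sqrt n+2/5)^2)$. Failure becomes Lemma~\ref{lem:scalar}, proved by monotonicity of $h$ and an exact binomial check at $\sqrt{15}$. You instead prove the sharp statement $a_*^{\,n-1}<n^{-n/2}$, which is exactly the inequality that reverses between $n=14$ and $n=15$.

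Your monotonicity plan goes through cleanly. With $r=\sqrt{4n-3}$ one finds
\[
g'(n)=\frac{1}{2r}-\log\frac{r+1}{2\sqrt n}.
\]
This is negative for all $n\geq15$, for instance via $\log(1+x)\geq x-x^2/2$; asymptotically it equals $-\frac{1}{4\sqrt n}+O(n^{-1})$. Also $g(15)\approx-0.028$ while $g(14)\approx+0.004$, so no fallback is needed. The check at $n=15$ only requires rational bounds on $\sqrt{57}$ and $\log 15$ to about four digits.

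\emph{Trade-off.} The paper's choice yields an explicit uniform family with a rational parameter and exact-arithmetic endpoint checks, which it reuses in Section~\ref{sec:signed-rescue}. Your route yields only the existence of some $a$ in the window, which is all the theorem requires.

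One small slip: the $n\leq14$ result of~\cite{Pinasco2023} is the ``if'' direction of the equivalence. The construction for $n\geq15$ is the ``only if'' direction.
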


Thus the longest-sum prescription has a sharp dimensional threshold at \(n=15\). The final part of the paper returns to the signed-sum selection question within the same family. Although the longest signed sum fails, suitable nonmaximal signed sums still satisfy the polarization bound. We first show that almost balanced patterns with rib imbalance \(q_n=x\sqrt n+O(1)\) have a simple limiting profile, with a transition at \(x=\sqrt e-1\). We then give, for every \(n\geq15\), an explicit successful sign pattern by taking the smallest admissible imbalance not below \(\sqrt n\).

\section{The simplex-umbrella construction}

Fix $n\geq3$, put $m=n-1$, and let $e\in\mathbb R^n$ be a unit vector. For $0<a<1$, choose vectors $w_1,\dots,w_m\in e^\perp$ forming a centered regular simplex, with its scale chosen so that $\|w_i\|^2=1-a^2$. The resulting geometric data are
\begin{equation}\label{eq:simplex}
\sum_{i=1}^m w_i=0,
\qquad
\|w_i\|^2=1-a^2,
\qquad
\langle w_i,w_j\rangle=-\frac{1-a^2}{m-1}
\quad(i\neq j).
\end{equation}
Here the first identity expresses that the simplex is centered, while the last follows from regularity together with the chosen scale.
Such a simplex lies in an $(m-1)$-dimensional subspace of $e^\perp$. Define
\begin{equation}\label{eq:vectors}
v_i=ae+w_i\quad(1\leq i\leq m),
\qquad
v_n=e.
\end{equation}
All the $v_i$ are unit vectors. Since the components orthogonal to $e$ cancel,
\begin{equation}\label{eq:s}
s:=\sum_{i=1}^n v_i=(ma+1)e.
\end{equation}
Consequently, $s/\|s\|=e$ and
\begin{equation}\label{eq:product}
P(e)=a^m=a^{n-1}.
\end{equation}
Thus a smaller value of $a$ makes the product smaller. The remaining issue is to determine how small $a$ may be while $s$ remains the longest signed sum.

We shall require strict maximality. Besides simplifying the comparison among sign patterns, this ensures that $s$ is, up to a global change of sign, the unique longest signed sum. Thus the failure exhibited below does not depend on how one chooses among several maximizers.

For two distinct ribs $v_i$ and $v_j$, the orthogonality of $e$ and the simplex subspace gives
\begin{align*}
\langle v_i,v_j\rangle
&=\langle ae+w_i,ae+w_j\rangle\\
&=a^2\langle e,e\rangle
  +a\langle e,w_j\rangle
  +a\langle w_i,e\rangle
  +\langle w_i,w_j\rangle\\
&=a^2+\langle w_i,w_j\rangle\\
&=a^2-\frac{1-a^2}{m-1}\\
&=\frac{ma^2-1}{m-1}.
\end{align*}
We therefore set
\begin{equation}\label{eq:lambda}
\lambda:=\frac{1-ma^2}{m-1},
\qquad
\langle v_i,v_j\rangle=-\lambda
\quad(i\neq j\leq m).
\end{equation}
In the range $ma^2<1$ considered below, $\lambda>0$, so distinct ribs have common negative scalar product $-\lambda$. Small values of $\lambda$ mean that the ribs are nearly pairwise orthogonal.

For every nonempty proper subset $A\subset\{1,\dots,n\}$, set
\[
s_A=s-2\sum_{i\in A}v_i.
\]
Then
\begin{equation}\label{eq:cut}
\|s_A\|^2
=
\|s\|^2
-4\sum_{\substack{i\in A\\ j\notin A}}
\langle v_i,v_j\rangle.
\end{equation}
Thus $s$ is strictly longer than $s_A$ whenever the scalar-product sum across the cut $A\mid A^c$ is positive.

Let
\[
k=\#\bigl(A\cap\{1,\dots,m\}\bigr)
\]
be the number of ribs in $A$. There are two cases. If $n\notin A$, then $A$ consists of $k$ ribs. For each such rib, the handle $v_n=e$ contributes $a$ across the cut, while the $m-k$ ribs in the complement contribute $-\lambda$ each. Hence
\[
\sum_{\substack{i\in A\\ j\notin A}}
\langle v_i,v_j\rangle
=
k\bigl(a-(m-k)\lambda\bigr).
\]
If $n\in A$, it is more convenient to count from the complement, which contains $m-k$ ribs. Each such rib has scalar product $a$ with the handle and scalar product $-\lambda$ with each of the $k$ ribs of $A$. Therefore
\[
\sum_{\substack{i\in A\\ j\notin A}}
\langle v_i,v_j\rangle
=
(m-k)\bigl(a-k\lambda\bigr).
\]
In every nontrivial case, the number of negative contributions $-\lambda$ accompanying a positive contribution $a$ is at most $m-1$. It is therefore enough to impose the uniform worst-case condition
\begin{equation}\label{eq:maximality}
a-(m-1)\lambda
=
a-(1-ma^2)
=
ma^2+a-1>0.
\end{equation}
Under \eqref{eq:maximality}, every nontrivial cut sum is positive, and hence $s$ is the unique longest signed sum up to global sign.

The boundary case
\[
ma^2+a-1=0
\]
corresponds to ties among maximizing sign patterns: flipping any one of the first $m$ signs leaves the norm unchanged. We work instead with the strict inequality \eqref{eq:maximality}, which places the configuration inside the region of unique maximality.

We now combine the two requirements on $a$. On the one hand, $s$ must remain the unique longest signed sum; on the other, its normalized direction $e$ must fail the polarization bound. Thus the umbrella must satisfy the two opposite inequalities
\begin{equation}\label{eq:two-conditions}
ma^2+a-1>0,
\qquad
a^{n-1}<n^{-n/2}.
\end{equation}
Equivalently,
\begin{equation}\label{eq:window-a}
\alpha_n<a<\beta_n,
\qquad
\alpha_n:=\frac{\sqrt{4n-3}-1}{2(n-1)},
\qquad
\beta_n:=n^{-\frac{n}{2(n-1)}}.
\end{equation}
The left endpoint is the threshold for strict maximality of the all-positive sum, while the right endpoint is the threshold below which its direction fails the polarization bound.

The two endpoints in \eqref{eq:window-a} are already implicit in the proof of the positive result for $n\leq14$. The relevant scalar inequality holds up to dimension $14$ and reverses at $n=15$. The construction below shows that this reversal is not merely a limitation of the argument: once the interval $(\alpha_n,\beta_n)$ becomes nonempty, it can be realized geometrically by a family for which the Bang vector fails.

\subsection{The parameter scale}

The order of $a$ is forced by the two inequalities in \eqref{eq:two-conditions}. To see the leading scale, write
\[
a=\frac{\kappa_n}{\sqrt n}.
\]
Then
\[
(n-1)a^2+a-1=\kappa_n^2-1+o(1).
\]
Thus maximality requires $\liminf \kappa_n\geq1$. On the other hand,
\[
n^{n/2}a^{n-1}=\sqrt n\,\kappa_n^{\,n-1}.
\]
If $\kappa_n$ stays larger than $1$ by a fixed amount, this expression grows exponentially, so the direction $e$ cannot violate the polarization bound. The two requirements therefore force
\[
\kappa_n\longrightarrow1,
\qquad\text{or equivalently}\qquad
a\sim n^{-1/2}.
\]

We next look at the first correction to this scale. Motivated by the preceding balance, put
\begin{equation}\label{eq:param-c}
a=\frac1{\sqrt n+c}.
\end{equation}
For fixed $c$,
\[
a
=
\frac1{\sqrt n}
-\frac{c}{n}
+O(n^{-3/2}),
\]
so $c>0$ measures a displacement of order $n^{-1}$ below $n^{-1/2}$. The asymptotic analysis suggests choosing $c$ in a range for which, in high dimensions, both competing conditions become favorable.

Substituting \eqref{eq:param-c} into the two conditions gives
\begin{equation}\label{eq:taylor-max}
(n-1)a^2+a-1
=
\frac{1-2c}{\sqrt n}+O\!\left(\frac{1+c^2}{n}\right),
\end{equation}
and
\begin{equation}\label{eq:taylor-failure}
\log\bigl(n^{n/2}a^{n-1}\bigr)
=
\frac12\log n-c\sqrt n+\frac{c^2}{2}
+O\!\left(\frac{1+c+c^3}{\sqrt n}\right).
\end{equation}
For fixed $c$, the leading term in \eqref{eq:taylor-max} is positive when $c<1/2$. In \eqref{eq:taylor-failure}, every fixed $c>0$ makes the negative term $-c\sqrt n$ dominate the positive term $\frac12\log n$, so the logarithm is eventually negative. Hence the two requirements are asymptotically compatible in the range
\[
0<c<\frac12.
\]

The exact conditions determine a window
\begin{equation}\label{eq:window-c}
\ell_n<c<u_n,
\end{equation}
with lower and upper endpoints
\begin{equation}\label{eq:ell-u}
\ell_n
=
\sqrt n\left(n^{\frac1{2(n-1)}}-1\right),
\qquad
u_n
=
\frac{1+\sqrt{4n-3}}2-\sqrt n.
\end{equation}
These are the images of the exact endpoints $\beta_n$ and $\alpha_n$, respectively, under the change of variables $a=1/(\sqrt n+c)$. Their expansions,
\[
\alpha_n
=
\frac1{\sqrt n}-\frac1{2n}+O(n^{-3/2}),
\qquad
\beta_n
=
\frac1{\sqrt n}
-
\frac{\log n}{2n^{3/2}}
+
O\!\left(\frac{(\log n)^2}{n^{5/2}}\right),
\]
are consistent with \eqref{eq:taylor-max}--\eqref{eq:taylor-failure}.

For $n=14$ one finds
\[
\ell_{14}\approx0.3997>0.3984\approx u_{14},
\]
so the window is empty. For $n=15$ the order is reversed:
\[
\ell_{15}\approx0.3933<0.4019\approx u_{15}.
\]
Thus dimension $15$ is the first one in which the two requirements can be met simultaneously.

\section{A uniform family of counterexamples}

The preceding analysis identifies an admissible parameter window in each dimension. We now seek a single value of $c$ that works throughout the whole range $n\geq15$, thereby producing a uniform family of counterexamples. In the first admissible dimension,
\[
\ell_{15}\approx0.3933
<\frac25
<0.4019\approx u_{15}.
\]
Thus the simple rational choice $c=2/5$ lies strictly inside the window at $n=15$. For each $n\geq15$, set
\begin{equation}\label{eq:choice-a}
a_n=\frac1{\sqrt n+2/5}.
\end{equation}
Since the dimension will always be fixed in what follows, we write simply $a$ for $a_n$ unless the dependence on $n$ needs to be emphasized. The value $2/5$ has no particular geometric significance; it is a convenient point in the first admissible window. We show that this choice remains admissible for every $n\geq15$.

The maximality condition follows directly by substitution. First,
\[
(n-1)a^2<1,
\]
so the corresponding value of $\lambda$ is positive. Moreover,
\begin{equation}\label{eq:delta}
(n-1)a^2+a-1
=
\frac{5\sqrt n-19}
{25(\sqrt n+2/5)^2}>0
\qquad(n\geq15).
\end{equation}
Hence the all-positive sum is uniquely longest, up to global sign, for every umbrella in the family.

We turn to the second requirement, the failure of the polarization bound in the handle direction.

\begin{lemma}\label{lem:scalar}
For every integer $n\geq15$,
\[
\left(1+\frac{2}{5\sqrt n}\right)^{n-1}>\sqrt n.
\]
\end{lemma}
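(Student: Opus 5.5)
Take logarithms and write $t=1/\sqrt n$; the inequality of Lemma~\ref{lem:scalar} becomes
\[
(n-1)\log\Bigl(1+\tfrac{2}{5\sqrt n}\Bigr)>\tfrac12\log n .
\]
The left side grows like $\tfrac25\sqrt n$ and the right like $\tfrac12\log n$, so the content is a quantitative check near $n=15$, where the margin is thin. The check at $n=15$ is exactly the statement $\ell_{15}<2/5$ from the window computation.

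The plan is to use the elementary lower bound $\log(1+x)\ge x-\tfrac{x^2}{2}$ for $x\ge0$, with $x=\tfrac{2}{5\sqrt n}$. This gives
\[
(n-1)\log\Bigl(1+\tfrac{2}{5\sqrt n}\Bigr)\ge (n-1)\Bigl(\tfrac{2}{5\sqrt n}-\tfrac{2}{25 n}\Bigr)=\tfrac25\sqrt n-\tfrac{2}{5\sqrt n}-\tfrac{2}{25}+\tfrac{2}{25n}.
\]
It then suffices to prove
\[
g(n):=\tfrac25\sqrt n-\tfrac{2}{5\sqrt n}-\tfrac{2}{25}-\tfrac12\log n>0
\]
for $n\ge15$. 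Regarding $g$ as a function of a real variable $y=\sqrt n\ge\sqrt{15}$,
\[
g=\tfrac25 y-\tfrac{2}{5y}-\tfrac2{25}-\log y,
\]
with derivative $\tfrac25+\tfrac{2}{5y^2}-\tfrac1y$. This derivative is positive once $y\ge 5/2$, so $g$ is increasing on the range and only the value at $n=15$ needs checking.

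The main obstacle is precisely that endpoint. At $y=\sqrt{15}\approx3.873$ we get $\tfrac25y\approx1.549$, $\tfrac{2}{5y}\approx0.103$ and $\log y\approx1.354$, so $g\approx1.549-0.103-0.080-1.354\approx0.012$. This is positive, but the margin is small, so the second-order bound is barely adequate and the numerics must be done with rigorous rounding. If the margin proves too thin under careful interval estimates, I would instead use the third-order bound $\log(1+x)\ge x-\tfrac{x^2}{2}+\tfrac{x^3}{3}-\tfrac{x^4}{4}$. That adds roughly $\tfrac{8}{375}\cdot\tfrac{n-1}{n^{3/2}}\approx0.005$ at $n=15$, and it preserves monotonicity for $n\ge16$ or so.

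A more robust fallback is to verify finitely many small cases ($15\le n\le N$) directly by bounding $(1+\tfrac{2}{5\sqrt n})^{n-1}$ and $\sqrt n$ with rational enclosures. The monotonicity argument is then only needed for $n\ge N$, where the margin is comfortable (already at $n=20$, $g\approx0.08$).
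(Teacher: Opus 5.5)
Your proof is correct. Its skeleton matches the paper's: pass to $y=\sqrt n$, prove monotonicity, check $y=\sqrt{15}$, and use $\log(1+t)\ge t-t^2/2$. The difference is where you apply that bound, and this changes what the endpoint check requires.

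The paper keeps the exact function $h(x)=(x^2-1)\log\bigl(1+\frac{2}{5x}\bigr)-\log x$. It applies the Taylor bound only inside $h'$, getting $h'(x)>\frac25-\frac{29}{25x}>0$. It then certifies $h(\sqrt{15})>0$ in exact arithmetic: truncating the binomial expansion of $\bigl(1+\frac{2}{5\sqrt{15}}\bigr)^{14}$ after the $k=4$ term gives $r_1+r_2\sqrt{15}$ with rational $r_i$. Everything then reduces to the integer inequality $293141^2>15\cdot 73565^2$, with no decimals and no transcendental constants.

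You apply the bound to the function itself. This makes monotonicity immediate, since $g'(y)=(2y-1)(y-2)/(5y^2)$. The cost is that your endpoint inequality $\frac{28\sqrt{15}}{75}-\frac{2}{25}>\frac12\log 15$ involves $\log 15$, so it cannot be settled by integer arithmetic and needs a certified estimate.

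That estimate is not delicate, contrary to your worry. The bounds $\sqrt{15}>3.87$ and $\log 15<2.71$ suffice. The second follows, for example, from $\log 15=4\log 2-\log\frac{16}{15}$ with $\log 2<0.6932$ and $\log\frac{16}{15}>\frac1{15}-\frac1{450}$. Together they give $g(\sqrt{15})>\frac{28\cdot 3.87}{75}-\frac{2}{25}-1.355>0.009$. So neither the third-order bound nor the finite case check is needed, but you should write out such a certified bound rather than quote rounded decimals.

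A minor slip: at $n=20$ one has $g\approx 0.12$, not $0.08$.
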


\begin{proof}
Embed the discrete inequality in a one-variable monotonicity problem. For $x\geq\sqrt{15}$ set
\[
h(x)
=(x^2-1)\log\left(1+\frac{2}{5x}\right)-\log x.
\]
Then the desired assertion is $h(\sqrt n)>0$. Using
$\log(1+t)>t-t^2/2$ for $t>0$, we obtain
\begin{align*}
h'(x)
&>
\frac45-\frac{4}{25x}
-\frac{2(x^2-1)}{x(5x+2)}
-\frac1x\\
&>
\frac25-\frac{29}{25x}>0
\qquad(x\geq\sqrt{15}),
\end{align*}
where the second inequality uses
\[
\frac{2(x^2-1)}{x(5x+2)}<\frac25.
\]
Thus it is enough to check the left endpoint. By the binomial theorem,
\[
\left(1+\frac{2}{5\sqrt{15}}\right)^{14}
>
\frac{293141}{140625}
+\frac{13412}{28125}\sqrt{15}.
\]
The right-hand side is larger than $\sqrt{15}$, since this is equivalent to
\[
\frac{293141}{73565}>\sqrt{15},
\]
and
\[
293141^2-15\cdot73565^2=4754507506>0.
\]
It follows that $h(\sqrt{15})>0$, and hence that
$h(\sqrt n)>0$ for every integer $n\geq15$.
\end{proof}

Lemma~\ref{lem:scalar} is equivalent to
\[
\left(\sqrt n+\frac25\right)^{n-1}>n^{n/2},
\]
and therefore
\begin{equation}\label{eq:failure}
a^{n-1}<n^{-n/2}.
\end{equation}
Combining \eqref{eq:delta}, \eqref{eq:failure}, and the comparison of signed sums in the previous section proves the negative assertion in Theorem~\ref{thm:main} for every $n\geq15$. The positive assertion for $n\leq14$ is the result of~\cite{Pinasco2023}. This completes the proof.

\section{Nonmaximal signed sums in the umbrella family}\label{sec:signed-rescue}

Theorem~\ref{thm:main} concerns the longest signed sum, whereas the question of Pappas and R\'ev\'esz allows an arbitrary choice of signs. The simplex umbrellas make this distinction especially transparent. By symmetry, all rib sign patterns with the same imbalance give the same normalized polarization product, so the discrete problem reduces to one scalar parameter. We first derive the exact formula, then describe the whole natural scale $q$ of order $\sqrt n$, and finally select one particularly simple representative that works in every dimension $n\geq15$.

Throughout this section,
\[
m=n-1,
\qquad
a=\frac1{\sqrt n+2/5},
\]
and the vectors $v_1,\dots,v_m,v_n=e$ are those constructed above. Recall that
\begin{equation}\label{eq:lambda-rescue}
\lambda=\frac{1-ma^2}{m-1}>0,
\qquad
\langle v_i,v_j\rangle=-\lambda
\quad(i\neq j\leq m).
\end{equation}

\subsection{Reduction to the rib imbalance}

Choose signs $\varepsilon_1,\dots,\varepsilon_m\in\{-1,1\}$ on the ribs and keep the positive sign on the handle. Set
\begin{equation}\label{eq:q-general}
q=\sum_{i=1}^m\varepsilon_i,
\end{equation}
so that $q\equiv m\pmod2$, and define
\begin{equation}\label{eq:S-q}
S_q=e+\sum_{i=1}^m\varepsilon_i v_i.
\end{equation}
There are $(m+q)/2$ positive rib signs and $(m-q)/2$ negative ones. The symmetry of the regular simplex implies that every quantity below depends on the sign pattern only through $q$.

The handle factor is
\begin{equation}\label{eq:handle-factor}
\langle S_q,e\rangle=1+aq.
\end{equation}
For a rib $v_j$,
\begin{align*}
\langle S_q,v_j\rangle
&=a+\varepsilon_j-\lambda\sum_{i\neq j}\varepsilon_i\\
&=a-\lambda q+(1+\lambda)\varepsilon_j.
\end{align*}
Introduce
\begin{equation}\label{eq:theta-rescue}
\theta=\frac{a-\lambda q}{1+\lambda}.
\end{equation}
Then
\begin{equation}\label{eq:rib-factors-theta}
\langle S_q,v_j\rangle
=
\begin{cases}
(1+\lambda)(1+\theta),&\varepsilon_j=1,\\[1mm]
-(1+\lambda)(1-\theta),&\varepsilon_j=-1.
\end{cases}
\end{equation}
Finally,
\begin{align}
\|S_q\|^2
&=n+2aq-\lambda(q^2-m)\notag\\
&=n+2aq+\lambda(m-q^2).
\label{eq:norm-Sq}
\end{align}
Consequently,
\begin{equation}\label{eq:P-rescue-new}
P\left(\frac{S_q}{\|S_q\|}\right)
=
\frac{
|1+aq|(1+\lambda)^m
|1+\theta|^{(m+q)/2}
|1-\theta|^{(m-q)/2}
}{
\bigl(n+2aq+\lambda(m-q^2)\bigr)^{n/2}
}.
\end{equation}
It will be convenient to measure the product relative to the polarization threshold by
\begin{equation}\label{eq:F-def}
F_n(q)
=
n^{n/2}
P\left(\frac{S_q}{\|S_q\|}\right).
\end{equation}
Thus $F_n(q)\geq1$ is precisely the desired inequality.

\subsection{The \texorpdfstring{$\sqrt n$}{sqrt(n)} imbalance scale}

The parameter $a$ satisfies $a\sim n^{-1/2}$. Hence an imbalance $q$ of order $\sqrt n$ changes the handle factor $1+aq$ by an amount of order one, while $q/(n-1)\to0$, so the positive and negative rib populations remain asymptotically balanced. This scale therefore retains the symmetry of the ribs while producing a visible tilt in the handle direction.

The next proposition describes the whole scale at once.

\begin{proposition}\label{prop:limiting-profile}
Fix $x\geq0$, and let $(q_n)$ be any sequence of admissible rib imbalances such that
\[
q_n=x\sqrt n+O(1),
\qquad
q_n\equiv n-1\pmod2.
\]
Then
\begin{equation}\label{eq:limiting-profile}
\lim_{n\to\infty}\log F_n(q_n)
=
\log(1+x)-\frac12.
\end{equation}
\end{proposition}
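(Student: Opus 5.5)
We expand each factor of \eqref{eq:P-rescue-new} asymptotically, take logarithms, and add. Throughout, $q=q_n=x\sqrt n+O(1)$, $m=n-1$ and $a=(\sqrt n+2/5)^{-1}=n^{-1/2}+O(n^{-1})$.

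\textbf{Parameter sizes.} By \eqref{eq:delta}, $1-ma^2=1-a+O(n^{-1/2})$, and $1-a=O(n^{-1/2})$. Hence
\[
\lambda=\frac{1-ma^2}{m-1}=O(n^{-3/2}),\qquad m\log(1+\lambda)=O(n^{-1/2})\to0 .
\]
It follows that
\[
\theta=\frac{a-\lambda q}{1+\lambda}=a+O(n^{-1})=n^{-1/2}+O(n^{-1}).
\]

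\textbf{Individual factors.} The handle factor satisfies $1+aq=1+x+O(n^{-1/2})$, which contributes $\log(1+x)$ in the limit. For the rib factors, expand $\log(1\pm\theta)=\pm\theta-\theta^2/2+O(\theta^3)$:
\[
\frac{m+q}{2}\log(1+\theta)+\frac{m-q}{2}\log(1-\theta)=q\theta-\frac{m\theta^2}{2}+O(m\theta^3+|q|\theta^2).
\]
With $\theta=a+O(n^{-1})$, we have $q\theta=x+O(n^{-1/2})$ and $m\theta^2=1+O(n^{-1/2})$. The error terms are $O(n^{-1/2})$, so this part tends to $x-\tfrac12$.

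\textbf{The denominator.} Write
\[
n^{n/2}\bigl(n+2aq+\lambda(m-q^2)\bigr)^{-n/2}=\Bigl(1+\frac{2aq+\lambda(m-q^2)}{n}\Bigr)^{-n/2}.
\]
Here $2aq=2x+O(n^{-1/2})$ and $\lambda(m-q^2)=O(n^{-1/2})$. The logarithm of this factor is therefore
\[
-\frac n2\cdot\frac{2x+O(n^{-1/2})}{n}+O(n^{-1})\;\longrightarrow\;-x.
\]

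\textbf{Conclusion.} Summing the four limits gives
\[
\log(1+x)+(x-\tfrac12)-x+0=\log(1+x)-\tfrac12,
\]
which is \eqref{eq:limiting-profile}.

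\textbf{Main obstacle.} The key step is the cancellation between the linear term $q\theta\to x$ from the ribs and the $-x$ from the normalization. This requires pinning down $q\theta$ and $2aq/2$ to $o(1)$ accuracy, that is, $\theta-a=o(n^{-1/2})$. That estimate comes from the bound $\lambda=O(n^{-3/2})$, which in turn rests on the nearly critical choice of $a$ encoded in \eqref{eq:delta}. The remaining error terms are routine bookkeeping of the sizes $\theta\sim n^{-1/2}$, $q\sim\sqrt n$ and $m\sim n$.
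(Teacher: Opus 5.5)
Your proof is correct and follows essentially the paper's argument: you expand each factor of the exact formula using $\lambda=O(n^{-3/2})$, $\theta=n^{-1/2}+O(n^{-1})$, $q\theta\to x$ and $m\theta^2\to1$, and the linear rib term cancels against the normalization. The paper only groups the rib factors differently, as $\frac m2\log(1-\theta^2)+\frac q2\log\frac{1+\theta}{1-\theta}$. One slip should be fixed: the identity for $(n-1)a^2+a-1$ gives $1-ma^2=a+O(n^{-1/2})$ with $a=O(n^{-1/2})$, whereas you wrote $1-ma^2=1-a+O(n^{-1/2})$ and $1-a=O(n^{-1/2})$, both of which are false, although your conclusion $\lambda=O(n^{-3/2})$ is right.
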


\begin{proof}
Write $t=\sqrt n$. From $a=1/(t+2/5)$ and \eqref{eq:lambda-rescue},
\begin{equation}\label{eq:basic-asymptotics}
a=\frac1t+O(t^{-2}),
\qquad
\lambda=\frac{4}{5t^3}+O(t^{-4}).
\end{equation}
Since $q_n=xt+O(1)$, these estimates give
\begin{equation}\label{eq:theta-asymptotics}
aq_n=x+o(1),
\qquad
m\lambda=o(1),
\qquad
\theta=\frac1t+O(t^{-2}).
\end{equation}
In particular,
\begin{equation}\label{eq:theta-limits}
m\theta^2\longrightarrow1,
\qquad
q_n\theta\longrightarrow x,
\qquad
m\theta^4\longrightarrow0.
\end{equation}
For all sufficiently large $n$ one has $1+aq_n>0$ and $|\theta|<1$, so the absolute values in \eqref{eq:P-rescue-new} may be omitted in the logarithmic calculation below. Moreover, \eqref{eq:norm-Sq} gives
\begin{equation}\label{eq:norm-asymptotic}
\|S_{q_n}\|^2
=n+2x+o(1),
\end{equation}
since $\lambda(m-q_n^2)=o(1)$.

Taking logarithms in \eqref{eq:P-rescue-new} yields
\begin{align}\label{eq:Phi-profile}
\log F_n(q_n)
={}&
\log(1+aq_n)
+m\log(1+\lambda)
+\frac m2\log(1-\theta^2)\notag\\
&+\frac{q_n}{2}\log\frac{1+\theta}{1-\theta}
-\frac n2\log\left(\frac{\|S_{q_n}\|^2}{n}\right).
\end{align}
The five terms have transparent limits. By \eqref{eq:theta-asymptotics},
\[
\log(1+aq_n)\longrightarrow\log(1+x),
\qquad
m\log(1+\lambda)\longrightarrow0.
\]
Using \eqref{eq:theta-limits} and the standard expansions at the origin,
\[
\frac m2\log(1-\theta^2)\longrightarrow-\frac12,
\qquad
\frac{q_n}{2}\log\frac{1+\theta}{1-\theta}
\longrightarrow x.
\]
Finally, \eqref{eq:norm-asymptotic} gives
\[
\frac n2\log\left(\frac{\|S_{q_n}\|^2}{n}\right)
=
\frac n2\log\left(1+\frac{2x+o(1)}n\right)
\longrightarrow x.
\]
The last two contributions cancel, leaving \eqref{eq:limiting-profile}.
\end{proof}

The limiting profile has a sharp transition inside the $\sqrt n$ scale.

\begin{corollary}\label{cor:asymptotic-signed-rescue}
Let $q_n=x\sqrt n+O(1)$ be admissible. If
\[
x>\sqrt e-1,
\]
then $F_n(q_n)>1$ for all sufficiently large $n$. If
\[
0\leq x<\sqrt e-1,
\]
then $F_n(q_n)<1$ for all sufficiently large $n$.
\end{corollary}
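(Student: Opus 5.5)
The corollary follows directly from Proposition~\ref{prop:limiting-profile}. The plan is to read off the sign of the limit and then use continuity of the exponential to transfer it to $F_n(q_n)$ itself for large $n$. Define $g(x)=\log(1+x)-\frac12$ on $[0,\infty)$. This function is continuous and strictly increasing, and it vanishes exactly at $x=\sqrt e-1$. Hence $g(x)>0$ when $x>\sqrt e-1$, and $g(x)<0$ when $0\le x<\sqrt e-1$.

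First, fix $x>\sqrt e-1$ and an admissible sequence $q_n=x\sqrt n+O(1)$. By Proposition~\ref{prop:limiting-profile}, $\log F_n(q_n)\to g(x)>0$. Taking $\varepsilon=g(x)/2$, there is an $N$ with $\log F_n(q_n)>g(x)/2>0$ for all $n\ge N$. For such $n$ we get $F_n(q_n)>1$.

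The second case is symmetric. For $0\le x<\sqrt e-1$ the limit $g(x)$ is negative, so eventually $\log F_n(q_n)<g(x)/2<0$, and therefore $F_n(q_n)<1$.

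There is one technical point to check. The logarithm of $F_n(q_n)$ must be defined, meaning $F_n(q_n)>0$, for all large $n$. The proof of the proposition already establishes this, since $1+aq_n>0$ and $|\theta|<1$ for large $n$, and also $\|S_{q_n}\|^2=n+2x+o(1)>0$. So no factor in \eqref{eq:P-rescue-new} vanishes and the logarithm makes sense. Beyond this check, the proof is a routine limit argument.
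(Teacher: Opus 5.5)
Your proposal is correct and is essentially the paper's proof: both read off the eventual sign of $\log F_n(q_n)$ from the limit $\log(1+x)-\frac12$ in Proposition~\ref{prop:limiting-profile}, and this limit vanishes exactly at $x=\sqrt e-1$. Your explicit $\varepsilon=g(x)/2$ step and your check that $F_n(q_n)>0$ for large $n$ just spell out details the paper leaves implicit.
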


\begin{proof}
By Proposition~\ref{prop:limiting-profile}, the sign of $\log F_n(q_n)$ is eventually the sign of
\[
\log(1+x)-\frac12,
\]
which vanishes exactly at $x=\sqrt e-1$.
\end{proof}

Thus the success of a nonmaximal signed sum is not tied to one isolated choice of signs. There is an entire asymptotic regime of almost balanced patterns that works. The critical value $\sqrt e-1$ belongs to the limiting profile; no special meaning should be attached to any convenient constant chosen strictly above it.

\subsection{A uniform choice of the imbalance}

For the all-dimensional statement we choose an even simpler representative. For every $n\geq15$, let
\begin{equation}\label{eq:choice-q-sqrt}
q_n
=
\min\left\{
q\in\mathbb Z:
q\geq\sqrt n,
\quad
q\equiv n-1\pmod2
\right\}.
\end{equation}
Then
\begin{equation}\label{eq:q-sqrt-bounds}
\sqrt n\leq q_n<\sqrt n+2.
\end{equation}
The same rule is used in every dimension. The proof below is divided at $n=25$ only because the estimates become especially simple once $\sqrt n\geq5$; the ten preceding dimensions are then checked directly from the exact formula \eqref{eq:P-rescue-new}.

\begin{proposition}\label{prop:signed-rescue}
For every $n\geq15$, the imbalance $q_n$ defined by \eqref{eq:choice-q-sqrt} satisfies
\[
F_n(q_n)>1.
\]
Equivalently, the corresponding nonmaximal signed sum satisfies
\[
P\left(\frac{S_{q_n}}{\|S_{q_n}\|}\right)>n^{-n/2}.
\]
\end{proposition}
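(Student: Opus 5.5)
We split at $n=25$, as the paper anticipates. For $15\le n\le 24$ there are only ten cases, and in each $q_n$ is an explicit small integer: $q_{15}=4$, $q_{16}=5$, and so on. We evaluate \eqref{eq:P-rescue-new} directly, with $a=1/(\sqrt n+2/5)$, $\lambda=(1-ma^2)/(m-1)$ and $\theta=(a-\lambda q_n)/(1+\lambda)$, and check $\log F_n(q_n)>0$ with a rigorous margin. Interval arithmetic, or crude rational bounds on $\sqrt n$, should suffice, since the limiting value $\log 2-\tfrac12\approx0.19$ suggests the margins are not tiny.

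For $n\ge25$ we work with the exact logarithmic decomposition \eqref{eq:Phi-profile} and replace each of its five terms by an explicit one-sided bound. Write $t=\sqrt n\ge5$ and $q=q_n\in[t,t+2)$. We have $\lambda>0$ and, from \eqref{eq:delta}, $1-ma^2=a(1-\delta)$ with $0<\delta:=ma^2+a-1$ small. Also $\theta\le a/(1+\lambda)<a\le 1/t$, and $\theta>0$ once we check $\lambda q<a$; this holds because $\lambda=O(t^{-3})$ and $q=O(t)$.

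\begin{enumerate}
\item \emph{Handle term.} $\log(1+aq)\ge\log\bigl(1+t/(t+2/5)\bigr)$, which for $t\ge5$ is at least $\log(1+25/27)$.
\item \emph{Factor $(1+\lambda)^m$.} Since $\lambda>0$, the term $m\log(1+\lambda)\ge0$ can be discarded.
\item \emph{Symmetric rib term.} Using $\log(1-u)\ge -u-u^2$ for $u\le1/2$, we get $\tfrac m2\log(1-\theta^2)\ge-\tfrac m2(\theta^2+\theta^4)$. Since $m\theta^2<(n-1)/(t+2/5)^2<1$, this is at least $-\tfrac12(1+1/t^2)$.
\item \emph{Odd rib term.} $\tfrac q2\log\frac{1+\theta}{1-\theta}\ge q\theta$, which needs a lower bound on $\theta$. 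From $\lambda\le a(1-ma^2)/(m-1)$ and the explicit expression \eqref{eq:delta}, $\lambda q$ is $O(t^{-2})$ relative to $a$.
\item \emph{Norm term.} $\log(1+y)\le y$ gives $-\tfrac n2\log(\|S_q\|^2/n)\ge -aq-\tfrac12\lambda(m-q^2)$, and the last piece is at most $\tfrac12\lambda m=O(t^{-1})$.
\end{enumerate}

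Putting these together,
\[
\log F_n(q)\ \ge\ \log(1+aq)-\tfrac12+q(\theta-a)-O(t^{-1}).
\]
The main obstacle is that $q(\theta-a)$ is only $O(1/t)$, not $o(1)$ in a way that is automatically harmless. Its size is $q(a\lambda+\lambda q)/(1+\lambda)\lesssim t\cdot t^{-3}\cdot t=t^{-1}$. Writing it carefully gives a total error like $C/t$ with an explicit small $C$, to be compared against the margin $\log(1+25/27)-\tfrac12\approx0.155$.

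At $t=5$ the error terms must total below roughly $0.15$, and I expect $C\lesssim 1$, which gives $C/t\le0.2$. That could fail, so I would first try to keep the exact positive contributions rather than discard them: the term $m\log(1+\lambda)$, the cubic term $q\theta^3/3$ in the odd rib term, and the $+\lambda q^2/2$ inside the norm term. These partially cancel the losses. If $n\ge25$ is still too tight, I would move the split point up (say to $n\ge100$, where $C/t\le0.1$) and extend the direct numerical check to cover the intermediate dimensions.
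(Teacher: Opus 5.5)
Your architecture (a finite check for $15\le n\le 24$ plus an analytic bound for $n\ge 25$ built on \eqref{eq:Phi-profile}) matches the paper's. However, the analytic bound you actually assemble is false at the split point, and the repair is left as an unexecuted contingency.

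The term you discard, $m\log(1+\lambda)\approx m\lambda\approx\frac{4}{5t}$, is of the same order as the loss you single out, $q(\theta-a)=-\lambda q(q+a)/(1+\lambda)\approx-\lambda q^2$. Concretely, take $t=5$, where $\lambda=43/5589$. For $q\in[5,7)$ your three losses $\tfrac1{2t^2}+\lambda q(q+a)/(1+\lambda)+\tfrac12\lambda m$ total between about $0.31$ and $0.50$. That is far above the available margin $\log(52/27)-\tfrac12\approx0.155$. Even at the actual $q_{25}=6$, your bound evaluates to about $0.747-0.52-0.283-0.092\approx-0.15$, and keeping $\tfrac12\lambda q^2$ only raises it to about $-0.01$.

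So the expectation $C\lesssim1$ is wrong. Your fallback split at $n=100$ is also much tighter than you estimate. There your losses run from about $0.13$ to $0.17$, so $C/t\le 0.1$ fails. Bounding the handle term at $q=t$ and the losses at $q=t+2$, as your outline does, leaves a net margin of only about $6\cdot10^{-4}$.

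Step (4) also contains false intermediate claims:
\begin{enumerate}
\item $1-ma^2=a-\delta$, not $a(1-\delta)$.
\item The inequality $\lambda\le a(1-ma^2)/(m-1)$ would force $a\ge1$.
\item $\lambda q/a\approx\frac{4}{5t}$ is $O(t^{-1})$, not $O(t^{-2})$.
\end{enumerate}
Your later estimate $q(\theta-a)\asymp t^{-1}$ is the correct one.

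The missing idea is that the terms you list as optional do not merely ``partially cancel'' the losses; together they cancel them at leading order. Indeed $m\lambda-\lambda q^2-\tfrac12\lambda(m-q^2)=\tfrac{\lambda}{2}(m-q^2)$, which is only $O(t^{-2})$ since $q^2-m<4t+5$ and $\lambda<t^{-3}$.

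The paper builds this cancellation in from the start by never splitting off $(1+\lambda)^m$. It applies $\log(1+y)\ge y-y^2/(2(1-a))$ directly to the rib factors $1+u$ and $1-v$, with $u=a-\lambda(q-1)$ and $v=a-\lambda(q+1)$. It then uses $N_+u-N_-v=aq+\lambda(m-q^2)$ and $N_+u^2+N_-v^2\le ma^2$, and cancels $aq$ against the norm term. The result is \eqref{eq:Phi-short-lower}, which gives $\log F_n(q)>181/7900$ for all $n\ge25$.

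In your variables the fix is:
\begin{enumerate}
\item keep $m\log(1+\lambda)\ge m\lambda-\tfrac12m\lambda^2$;
\item keep $\tfrac12\lambda q^2$;
\item use $m\theta^2\le ma^2$ instead of $m\theta^2<1$.
\end{enumerate}
This yields $\log(1+aq)+\tfrac{\lambda}{2}(m-q^2)-\tfrac m2a^2(1+a^2)$ up to an $O(t^{-3})$ error, which is below $0.012$ for $t\ge5$. Since $1+a^2\le1/(1-a)$, this dominates \eqref{eq:Phi-short-lower} up to that error, and therefore closes for $n\ge25$. As submitted, however, the decisive estimate is neither correctly stated nor verified.

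Your interval-arithmetic plan for $15\le n\le24$ is fine in principle. The paper instead certifies those ten cases as exact algebraic inequalities by inserting $\log(1+y)\ge2y/(2+y)$ into \eqref{eq:Phi-short-lower}.
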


\begin{proof}
Write $q=q_n$ and set
\[
u=a-\lambda(q-1),
\qquad
v=a-\lambda(q+1).
\]
The signed scalar products of $S_q$ with the handle, a positive rib, and
the absolute value of a negative rib are respectively
\[
1+aq,\qquad 1+u,\qquad 1-v.
\]
Moreover, $0\leq v\leq u\leq a<1$.  Indeed, only the first inequality
requires verification.  Put $t=\sqrt n$.  From $q<t+2$ and
\[
\lambda=\frac{20t+29}{(5t+2)^2(t^2-2)}
\]
it is enough to observe that
\[
a-\lambda(t+3)
=
\frac{25t^3-10t^2-139t-107}
{(5t+2)^2(t^2-2)}>0
\qquad(t\geq\sqrt{15}).
\]

For $|y|\leq a$, the power series for the logarithm gives
\[
\log(1+y)\geq y-\frac{y^2}{2(1-a)}.
\]
Apply this inequality to the rib factors and use
$\log z\leq z-1$ for the normalizing factor in
\eqref{eq:P-rescue-new}.  Since there are
$N_\pm=(m\pm q)/2$ ribs of each sign,
\[
N_+u-N_-v=aq+\lambda(m-q^2),
\]
whereas $N_+u^2+N_-v^2\leq ma^2$.  After cancellation of the term
$aq$ we obtain the central estimate
\begin{equation}\label{eq:Phi-short-lower}
\log F_n(q)
\geq
\log(1+aq)
+\frac{\lambda}{2}(m-q^2)
-\frac{ma^2}{2(1-a)}.
\end{equation}

We first use \eqref{eq:Phi-short-lower} for $n\geq25$.  Thus
$t\geq5$ and $t\leq q<t+2$.  The elementary bounds
\[
\log(1+aq)\geq\frac{50}{79},
\qquad
\frac{ma^2}{2(1-a)}<\frac{51}{100},
\qquad
\frac{\lambda(q^2-m)}2<\frac1{10}
\]
follow respectively from
$aq\geq25/27$, the exact formula for $a$, and
$\lambda<t^{-3}$ together with
$q^2-m<4t+5$.  Consequently,
\[
\log F_n(q)
>
\frac{50}{79}-\frac{51}{100}-\frac1{10}
=\frac{181}{7900}>0.
\]

It remains to certify the dimensions $15\leq n\leq24$.  To avoid using
decimal approximations as proof, replace the logarithm in
\eqref{eq:Phi-short-lower} by
\[
\log(1+y)\geq\frac{2y}{2+y}\qquad(y\geq0).
\]
Direct substitution of $a=1/(\sqrt n+2/5)$ and of the admissible values
\[
\begin{array}{c|cccccccccc}
n&15&16&17&18&19&20&21&22&23&24\\
\hline
q_n&4&5&6&5&6&5&6&5&6&5
\end{array}
\]
gives, in each case, the exact algebraic inequality
\[
\frac{2aq_n}{2+aq_n}
+\frac{\lambda}{2}(m-q_n^2)
-\frac{ma^2}{2(1-a)}
>\frac1{10}.
\]
This proves $F_n(q_n)>1$ in the remaining dimensions and completes the
proof.
\end{proof}

For comparison, the longest signed sum corresponds to the extreme imbalance $q=m=n-1$ and fails throughout the umbrella family constructed in Section~3. Proposition~\ref{prop:signed-rescue} shows that the very same configurations contain successful signed sums whose rib signs are nearly balanced, with imbalance only of order $\sqrt n$. Thus the failure of the longest prescription is not a failure of signed sums as such, even within the counterexample family.

\section{Remarks and the signed-sum selection problem}

The vectors of the simplex umbrella span a hyperplane, since
\[
\sum_{i=1}^{n-1}v_i-(n-1)a_n v_n=0.
\]
No linear-independence assumption is involved in Theorem~\ref{thm:main}; the point of the construction is instead that it is produced directly in every dimension and that the longest signed sum is separated from all competitors by a strict gap. Since both this maximality gap and the failure of the polarization bound are strict, sufficiently small generic perturbations yield linearly independent counterexamples with the same properties.

It is important to distinguish the longest-sum prescription from the broader signed-sum selection problem. This distinction already appears in the counterexample of Matolcsi and Mu\~noz: although their longest signed sum fails, they observed that a different, nonmaximal signed sum still satisfies the polarization inequality~\cite[remarks following Theorem~3, item~(1)]{MatolcsiMunoz2006}. The simplex umbrellas exhibit the same phenomenon in every dimension $n\geq15$. The signed-sum selection problem is the following.

\begin{question}[Signed-sum selection problem]\label{question:any-signs}
Given unit vectors $v_1,\dots,v_n\in\mathbb R^n$, do there always exist signs
\[
\varepsilon_1,\dots,\varepsilon_n\in\{-1,1\}
\]
such that
\[
s_\varepsilon
=
\sum_{i=1}^n\varepsilon_i v_i
\neq0
\]
and
\[
\prod_{i=1}^n
\left|
\left\langle
\frac{s_\varepsilon}{\|s_\varepsilon\|},v_i
\right\rangle
\right|
\geq
n^{-n/2}?
\]
\end{question}

For the longest-sum prescription, Theorem~\ref{thm:main} gives a complete answer: it works universally if and only if $n\leq14$. For arbitrary systems, Question~\ref{question:any-signs} has an affirmative answer for $n\leq14$, since the longest signed sum works in that range, and remains open for $n\geq15$. Proposition~\ref{prop:signed-rescue} shows that the counterexamples constructed here do not obstruct Question~\ref{question:any-signs}: in every dimension $n\geq15$, the same simplex umbrellas contain nonmaximal signed sums satisfying the polarization bound.

\section*{Acknowledgments}
The author thanks Daniel Galicer for many stimulating discussions concerning the problem studied in this paper.

\section*{Declaration on the use of generative AI}
During the preparation of this manuscript, the author used OpenAI's ChatGPT as a generative-AI tool for exploratory calculations, symbolic consistency checks, organization of arguments, and drafting and editorial assistance. All AI-assisted material retained in the manuscript was independently reviewed and verified by the author, who assumes full responsibility for the accuracy, originality, and integrity of the work.

\end{document}